\documentclass[9pt]{article}
\usepackage{latexsym,amsfonts,euscript,amsthm}

\usepackage{amssymb}
\usepackage{stmaryrd}
\usepackage{mathrsfs,amsmath}
\usepackage{leftidx}

\usepackage[figuresright]{rotating}
\usepackage[table]{xcolor}
\usepackage{multirow}
\usepackage{booktabs}
\usepackage{tabularx}
\usepackage{tabu,multirow,multicol,makecell,booktabs,float}

\usepackage{tocbibind} 
\usepackage[colorlinks,urlcolor=blue,pagebackref]{hyperref}

\newtheorem{lem}{\bf Lemma}[section]

\newtheorem{prop}[lem]{\bf Proposition}

\newtheorem{thm}[lem]{\bf Theorem}

\newtheorem{mainthm}{Theorem}

\newcommand{\ackname}{Acknowledgements}
\makeatletter
\if@titlepage
  \newenvironment{acknowledgement}{%
    \titlepage
    \null\vfil
    \@beginparpenalty\@lowpenalty
    \begin{center}%
      \bfseries \ackname
      \@endparpenalty\@M
    \end{center}}%
  {\par\vfil\null\endtitlepage}
\else
  \newenvironment{acknowledgement}{%
    \if@twocolumn
      \section*{\ackname}%
    \else
      \small
      \begin{center}%
        {\bfseries \ackname\vspace{-0.5em}\vspace{\z@}}%
      \end{center}%
      \quotation
    \fi}
    {\if@twocolumn\else\endquotation\fi}
\fi
\makeatother

\makeatletter 
\@addtoreset{equation}{section}
\makeatother  

\makeatletter
\def\thanks#1{\g@addto@macro\@thanks{\footnotetext{#1}}}
\makeatother

				   \AtEndDocument{\bigskip{\footnotesize
				   \textsc{Dongfang Yang, Department of Mathematics, Suzhou University of Technology, No.99 South Third Ring
				   Road,
				   Changshu, Jiangsu, 215500, China.} \par  
				   \textit{Email address}: \href{mailto:dfyang1228@163.com}{\textsf{dfyang1228@163.com}}
				 }}

\title{\textbf{On prime divisors of character degrees and codegrees}
\thanks{\textbf{Keywords}\,\, character theory of finite groups, character degrees, character codegrees, prime divisors.\\
\textbf{2020 MR Subject Classification}\,\, Primary 20C15\\
The author is supported by the  NSF of China (No. 12301018), the Natural Science Foundation of Jiangsu Province (No. BK20231356).}}

\author{Dongfang Yang\\
\small{\emph{Dedicated to Professor Heng Lv on his 50th birthday}}}

\date{}

\begin{document}
\maketitle

\begin{abstract}
Let $G$ be a finite group, and let $\mathrm{Irr}(G)$ denote the set of irreducible complex characters of $G$.
For $\epsilon\in \{ \pm \}$, we define 
$\mathrm{cd}_{\epsilon}(G)=\{ \chi_{\epsilon}(1)\mid \chi\in \mathrm{Irr}(G) \}$,
where $\chi_{+}(1)=\chi(1)$ denotes the degree of $\chi$, $\chi_{-}(1)=|G:\ker(\chi)|/\chi(1)$ denotes the codegree of $\chi$.
Further, 
let $\omega_{\epsilon}(G)=\{ \pi(n)\mid n\in \mathrm{cd}_{\epsilon}(G) \}$, where $\pi(n)$ stands for the set of prime divisors of $n$.
We established that if $|\omega_{\epsilon}(G)|\leq 3$, then $G$ is solvable.
Additionally, a generalization of this result is obtained in the case when $\epsilon=+$.
\end{abstract}

\section{Introduction}

Let $G$ be a finite group, and let $\mathrm{Irr}(G)$ be the set of irreducible complex characters of $G$.
For $\epsilon\in \{ \pm \}$, we define 
$\mathrm{cd}_{\epsilon}(G)=\{ \chi_{\epsilon}(1)\mid \chi\in \mathrm{Irr}(G) \}$,
where $\chi_{+}(1)=\chi(1)$ denotes the degree of $\chi$, $\chi_{-}(1)=|G:\ker(\chi)|/\chi(1)$ denotes the \emph{codegree} of $\chi$.
So, $\mathrm{cd}_{+}(G)$ is the set of irreducible character degrees of $G$, while $\mathrm{cd}_{-}(G)$ is the set of irreducible character codegrees of $G$.

Since the papers by Isaacs and Passman \cite{isaacs65,isaacs68} in the 1960s, the impact of the set of irreducible character degrees on the structure of finite groups has been extensively studied.
A classical result of Isaacs \cite{isaacs69} asserts that: 
\begin{align}
	\text{If $|\mathrm{cd}_+(G)|\leq 3$, then $G$ is solvable with derived length at most $3$.} \label{isa}
\end{align}
In recent years, there has been a growing interest in exploring the structure of finite groups 
by the set of irreducible character codegrees.
 A dual version of Isaacs' result was proved by Du and Lewis \cite{du16} and 
by Alizadeh et al. \cite{alizadeh19}: 
\begin{align}
	\text{If $|\mathrm{cd}_-(G)|\leq 3$, then $G$ is solvable with derived length at most $2$.} \label{ali}
\end{align}
Specifically, Du and Lewis verified the case when $G$ has prime power order;
while Alizadeh et al. addressed the remaining cases.

Let $\omega_\epsilon(G)=\{ \pi(n)\mid n\in \mathrm{cd}_\epsilon(G) \}$, where $\pi(n)$ denotes the set of prime divisors of $n$ (we adopt the convention that $\pi(1)=\varnothing$).
It is noteworthy that $|\omega_\epsilon(G)|\leq |\mathrm{cd}_\epsilon(G)|$.
The present work aims to generalize (\ref{isa}) and (\ref{ali}) by means of the notion $|\omega_\epsilon(G)|$.

\begin{mainthm}\label{thmA}
	Let $G$ be a finite group.
	For $\epsilon\in \{ \pm \}$,
	if $|\omega_\epsilon(G)|\leq 3$, then $G$ is solvable.
\end{mainthm}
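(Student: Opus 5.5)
Take $G$ to be a counterexample of minimal order. The key observation is that the hypothesis passes to quotients. Every irreducible character of $G/N$ inflates to one of $G$ with the same degree. Its kernel contains $N$, and $|G/N:\ker|=|G:\ker|$, so the codegree is unchanged as well. Hence $\omega_\epsilon(G/N)\subseteq\omega_\epsilon(G)$, and every proper quotient of $G$ is solvable. So $G$ has a unique minimal normal subgroup $N$, and $N$ is nonsolvable. Then $N\cong S^k$ for a nonabelian simple group $S$, and $G$ embeds into $\mathrm{Aut}(N)$. Moreover, every nontrivial irreducible character of $G$ whose kernel does not contain $N$ is faithful.

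For $\epsilon=+$ we use the prime graph and related structure of $\mathrm{cd}(G)$. The sets in $\omega_+(G)$ are $\varnothing$ plus at most two nonempty sets $A,B$. First step: show that $G$ has an irreducible character degree divisible by every prime in $\rho(G)$, or else $\rho(G)=A\cup B$ with each degree's prime set equal to $A$ or $B$. A nonsolvable almost simple-type group $G\le \mathrm{Aut}(S^k)$ has many characters, so we expect to exhibit three characters whose degrees have pairwise distinct nonempty prime sets. I would take these from extensions of $\theta\in\mathrm{Irr}(N)$ of the form $\theta=\varphi\times\cdots\times\varphi$, using known results that $S$ has an $\mathrm{Aut}(S)$-invariant (hence extendible) character of degree divisible by various primes, e.g. the Steinberg character for Lie type, and explicit characters for alternating and sporadic groups. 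For $k\ge2$ one can also use $\varphi\times1\times\cdots\times1$, whose $G$-orbit size contributes extra primes.

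For $\epsilon=-$, faithful characters $\chi$ have codegree $|G|/\chi(1)$. The prime sets $\pi(|G|/\chi(1))$ must take at most two nonempty values. Characters of $G/N$ give codegrees dividing $|G/N|$. I would show that $\pi(G)$ is exhausted by the codegrees of faithful characters together with the trivial character. Then I would find, inside $S$, two irreducible characters lying under faithful characters of $G$ with different $p$-parts for suitable primes, plus a character of $G/N$ or a further faithful one, yielding three distinct nonempty prime sets. The tool here is that $\chi(1)_p<|G|_p$ always holds for faithful $\chi$ (degrees divide $|G:Z(G)|$, and here $Z(G)=1$). So every faithful codegree is divisible by all primes $p$ with $|G|_p\nmid\chi(1)$. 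If all faithful codegrees had full prime set $\pi(G)$, then some character of $G/N$ must give a second set. I would then need a faithful character of $p$-defect zero for some $p$, e.g. the Steinberg character at the defining characteristic, so that $p$ is missing. By Granville–Ono type results, simple groups have $p$-blocks of defect zero for most $p$, and these give several distinct sets.

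The main obstacle is the case analysis over the simple groups $S$ and the passage from $S$ to $G$: ensuring the chosen characters extend, or controlling how their degrees change by $|G:I_G(\theta)|$, which may add primes. I would first handle this uniformly via the Steinberg character and one defect-zero character for a second prime. Alternating and sporadic groups I would check with ATLAS-type data.
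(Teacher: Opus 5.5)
Your reduction to a minimal counterexample with a unique, nonsolvable minimal normal subgroup $N\cong S^k$ is correct and is how the paper begins. Everything after that is a program rather than a proof, and in both cases the step that carries the difficulty is missing.

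\textbf{The case $\epsilon=+$.} You never produce three characters with pairwise distinct nonempty prime sets; ``we expect to exhibit'' is the whole theorem. The tools you name do not do it as stated.
\begin{itemize}
\item $\mathrm{Aut}(S)$-invariance does not imply extendibility without extra hypotheses, for example cyclic Sylow subgroups of $G/N$. Neither $\mathrm{Out}(S)$ nor, for $k\ge 2$, the permutation action on the factors need satisfy these.
\item Even for extendible $\theta$, Gallagher's theorem multiplies degrees by members of $\mathrm{cd}_+(G/N)$.
\item Over non-invariant $\theta$ you pick up the factor $|G:\mathrm{I}_G(\theta)|$ and projective degrees of $\mathrm{I}_G(\theta)/N$.
\end{itemize}
So prime sets can merge. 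This genuinely happens for socle $\mathrm{PSL}_2(q)$ with field automorphisms, where degrees like $q\pm 1$ acquire factors such as $r$ or $2^a$ coming from $|G:S|$.

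The idea you are missing is the relative reduction. Since $N$ is perfect, $\mathrm{Irr}(G|N)$ contains no linear characters, so $|\omega_+(G)|\le 3$ gives $|\omega_+(G|N)|\le 2$. The paper then applies a theorem of Isaacs and Knutson to force these two prime sets to be disjoint. Hence $\Delta_+(N)$ is disconnected, so $N$ is simple and $N\cong\mathrm{PSL}_2(q)$ by White's classification. Only this one family then needs a case check, via White's description of $\mathrm{cd}_+(G)$ for $\mathrm{PSL}_2(q)\le G\le\mathrm{Aut}(\mathrm{PSL}_2(q))$. Without some such structural step you would need a uniform construction over all $S$, all $k$ and all $G\le\mathrm{Aut}(S^k)$, which you do not supply.

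\textbf{The case $\epsilon=-$.} The tool you state is false: a faithful $\chi$ can have $\chi(1)_p=|G|_p$. Examples are the Steinberg character of $\mathrm{PSL}_2(q)$ and a degree-$5$ character of $\mathsf{S}_5$. The condition $Z(G)=1$ only gives $\chi(1)\mid|G|$, and your next step, which uses faithful defect-zero characters, contradicts the claim anyway.

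More seriously, a $q$-defect zero $\theta\in\mathrm{Irr}(N)$ yields $q\nmid\chi_-(1)$ for $\chi\in\mathrm{Irr}(G|\theta)$ only if $q\nmid|G:N|$. You have no control over $|G:N|$. The paper gets that control as follows.
\begin{itemize}
\item It applies its classification of solvable groups with $|\omega_-|\le 3$ to $G/N$. Combined with Qian's theorem that the codegree prime sets cover $\pi(G)$, and with Burnside's $p^aq^b$-theorem, this forces $G/N$ to be a $p$-group and $\omega_-(G)=\{\varnothing,\{p\},\pi\}$ with $\{p\}\cup\pi=\pi(G)$.
\item A $q$-defect zero character of $N$, for a prime $q>3$ with $q\neq p$, then lies under some $\chi$ with $q\notin\pi(\chi_-(1))$.
\item An additional input (Riese--Schmid) rules out $\pi(\chi_-(1))=\{p\}$, giving the contradiction.
\item The remaining case, where $p$ is the only prime $>3$ dividing $|N|$, is finished by inspecting the simple $K_3$-groups.
\end{itemize}
The case $G=N$ also needs the separate fact $|\omega_-(S)|\ge 4$ for every simple $S$. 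The paper proves this using completeness of the codegree graph, Granville--Ono, and a hook-length and Bertrand argument for $\mathsf{A}_n$. Your remark ``these give several distinct sets'' does not show that the sets you obtain differ from one another and from those coming from $G/N$.
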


For $\epsilon\in \{ \pm \}$,
since 
$|\omega_{\epsilon}(\mathsf{A}_5)|=4$ (where $\mathsf{A}_5$ denotes the alternating group of degree 5),
there exists no extension of our theorem to the case of four sets of prime divisors of irreducible character degrees (resp. codegrees).
Furthermore, 
because $|\omega_{\epsilon}(G)|=2$ for every finite $p$-group $G$ and $|\omega_{\epsilon}(G)|\leq 4$ for every finite $\{ 2,3 \}$-group $G$,
one cannot expect to bound the derived length or even the Fitting height of finite solvable groups $G$ in terms of $|\omega_\epsilon(G)|$. 
However, when $\epsilon=+$,
we are able to prove a result
stronger than Theorem \ref{thmA},
which may be regarded as a generalization of \cite[Theorem B]{isaacs98} by Isaacs and Knutson.
Let $N\unlhd G$.
We define 
$\mathrm{cd}_+(G|N)=\{ \chi_+(1)\mid \chi\in \mathrm{Irr}(G)~\text{and}~N\nleq \ker(\chi) \}$ and $\omega_+(G|N)=\{ \pi(n)\mid n\in \mathrm{cd}_+(G|N) \}$.

\begin{mainthm}\label{thmB}
	Let $N$ be a normal subgroup of a finite group $G$.
	If $|\omega_{+}(G|N)|\leq 2$, then $N$ is solvable.
\end{mainthm}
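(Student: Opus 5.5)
We argue by contradiction, taking a counterexample with $|G|+|N|$ minimal. First we reduce to the case where $N$ is a minimal normal subgroup of $G$. If $M\unlhd G$ with $M<N$, then $\mathrm{cd}_+(G|M)\subseteq\mathrm{cd}_+(G|N)$, since any character not containing $M$ in its kernel does not contain $N$ either. Hence $|\omega_+(G|M)|\le 2$, and minimality makes $M$ solvable. Similarly, $\mathrm{Irr}(G/M)$ embeds in $\mathrm{Irr}(G)$, so $\mathrm{cd}_+(G/M\,|\,N/M)\subseteq\mathrm{cd}_+(G|N)$ and $N/M$ is solvable. Together these make $N$ solvable, a contradiction. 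So $N$ is a nonabelian minimal normal subgroup, $N=S_1\times\cdots\times S_t$ with the $S_i\cong S$ nonabelian simple and permuted transitively by $G$. Note also that $1\notin\mathrm{cd}_+(G|N)$: a linear character has $G'\le\ker\chi$, and $N=N'\le G'$. So every element of $\omega_+(G|N)$ is a nonempty set of primes.

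Next we produce elements of $\mathrm{cd}_+(G|N)$ with controlled prime sets, using two standard ingredients.

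\textbf{Ingredient 1: extendible Steinberg characters.} For $S$ of Lie type in characteristic $p$, the Steinberg character $\mathrm{St}_S$ extends to $\mathrm{Aut}(S)$ (Feit). Hence $\theta=\mathrm{St}^{\otimes t}$ extends to $G$, by Bianchi--Chillag--Lewis--Pacifici. Gallagher's theorem then gives the degrees $\theta(1)\beta(1)$ for $\beta\in\mathrm{Irr}(G/N)$, all lying in $\mathrm{cd}_+(G|N)$. Taking $\beta=1$ gives the prime set $\{p\}$.

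\textbf{Ingredient 2: a second character of $S$.} We take a second character $\varphi\in\mathrm{Irr}(S)$, also extendible to $\mathrm{Aut}(S)$ (or at least to its inertia group), whose degree is coprime to $p$ and divisible by at least two primes. By Tiep, Magaard--Tong-Viet and related work, nonabelian simple groups possess two nonprincipal $\mathrm{Aut}(S)$-invariant extendible characters of coprime degree. The character $\varphi^{\otimes t}$ then extends and contributes the prime set $\pi(\varphi(1))\ne\{p\}$.

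Now $\omega_+(G|N)$ already contains $\{p\}$ and $\pi(\varphi(1))$. We need a third distinct prime set. One source is a mixed character $\mathrm{St}\otimes\varphi\otimes\cdots$, which lies over $N$ when $t\ge2$; its $G$-orbit structure yields a degree divisible by $\{p\}\cup\pi(\varphi(1))$ times an index. Another source, when $t=1$, is a third extendible character of $S$ with a different prime set. For alternating and sporadic groups (and $\mathsf{A}_5\cong\mathrm{PSL}_2(4)$) we would instead check the character tables directly, or cite the known classification of simple groups whose degree prime-sets are few. The unifying statement we aim to prove is a lemma: every nonabelian simple $S$ has three $\mathrm{Aut}(S)$-invariant characters, each extending to $\mathrm{Aut}(S)$, with pairwise distinct sets of prime divisors of their degrees, none empty. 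Granting this, tensoring each with itself $t$ times gives three extendible characters of $N$ and hence three distinct elements of $\omega_+(G|N)$, a contradiction.

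The main obstacle is this lemma, specifically the case-checking for groups of Lie type of small rank such as $\mathrm{PSL}_2(q)$. There the only invariant degrees are $q$, $q\pm1$ and $(q\pm1)/2$, and one must show that $\pi(q+1)\ne\pi(q-1)$ and that suitable characters extend despite field and diagonal automorphisms. For $q$ odd, $\mathrm{PSL}_2(q)$ contributes $q$, $q+1$ and $q-1$, whose prime sets are pairwise distinct except possibly when $\pi(q+1)=\pi(q-1)$. That is impossible, because their greatest common divisor is $2$ and at least one of them has an odd prime factor once $q>3$ (with $q=9$ checked directly). For $q$ even, the degrees $q-1$ and $q+1$ are coprime and both greater than $1$. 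The general Lie-type case would be handled using unipotent characters, which are mostly $\mathrm{Aut}(S)$-invariant and extendible by Malle's results, with Zsigmondy primes separating their degrees.
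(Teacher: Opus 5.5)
There is a genuine gap. The ``unifying lemma'' that carries your $t=1$ case is false, and so is Ingredient~2, precisely for the groups $\mathrm{PSL}_2(q)$ where all the difficulty lies.

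\begin{itemize}
\item For $S=\mathsf{A}_5$, $\mathrm{Aut}(S)=\mathsf{S}_5$ interchanges the two characters of degree $3$. So the only nonprincipal $\mathrm{Aut}(S)$-invariant characters have degrees $4$ and $5$. The third prime set $\{2,3\}$ in $\mathsf{S}_5$ comes from an \emph{induced} character of degree $6$.
\item For $S=\mathsf{A}_6$, with degrees $1,5,5,8,8,9,10$, both pairs are interchanged in $\mathrm{Aut}(S)$, leaving only $9$ and $10$.
\item For $S=\mathrm{PSL}_2(27)$, the two characters of degree $13$ are swapped by the diagonal automorphism. The characters of degrees $26$ and $28$ are permuted in orbits of length $3$ by the field automorphism. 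Hence the Steinberg character is the \emph{only} nonprincipal $\mathrm{Aut}(S)$-invariant character. There is no second nonprincipal character of $S$ extending to $\mathrm{Aut}(S)$, so your mixed-character argument for $t\ge 2$ has no input either. Indeed, in $\mathrm{Aut}(\mathrm{PSL}_2(27))$ the degrees over $S$ are $26,27,78,84$, and only $27$ comes from an extension.
\end{itemize}

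Your $\mathrm{PSL}_2(q)$ paragraph does not address this. It lists degrees of $S$ (and is already wrong for $q=5$, where no character of degree $q+1$ exists). What matters are degrees in $G$, where non-invariant characters only appear multiplied by orbit lengths that depend on which almost simple group $G$ is. Likewise, ``checking character tables directly'' cannot dispose of the infinitely many alternating groups. The Lie-type case via unipotent characters and Zsigmondy primes is only a sketch.

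What is missing is a reduction that confines the problem to a single family, so that no uniform extendibility lemma is needed. The paper reduces further to $N$ being the \emph{unique} minimal normal subgroup of $G$. It then uses Isaacs--Knutson's Theorem D to get $\omega_+(G|N)=\{\pi_1,\pi_2\}$ with $\pi_1\cap\pi_2=\varnothing$. Clifford's theorem then makes $\Delta_+(N)$ disconnected, which forces $t=1$ and $N\cong\mathrm{PSL}_2(q)$ by Theorem~\ref{thm: simp gp disc}(1). Only then is a case analysis performed, and it is one you cannot avoid: White's explicit description of $\mathrm{cd}(G)$ for $\mathrm{PSL}_2(q)\le G\le\mathrm{Aut}(\mathrm{PSL}_2(q))$. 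That analysis produces three pairwise distinct prime sets (Lemma~\ref{lem: psl}) using exactly the induced degrees that your extension-based method never sees.
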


Since $|\omega_+(\mathsf{S}_5|\mathsf{A}_5)|=3$ (where $\mathsf{S}_5$ denotes the symmetric group of degree 5), the above theorem also cannot be extended to the case where the set of prime divisors of character degrees in $\mathrm{cd}_+(G|N)$ has size 3.
For the same reason as above, 
one cannot expect to bound the derived length or the Fitting height of $N$ in Theorem \ref{thmB}.

\section{Proofs}

Throughout the paper, 
we only consider finite groups and complex characters,
and 
we follow the standard conventions of \cite{huppertI} for group theory and \cite{isaacs94} for character theory.
For a positive integer $n$, we write $\pi(n)$ for the set of prime divisors of $n$, with the convention that 
$\pi(1)=\varnothing$.

For a finite group $G$, we denote $\pi(G)$ for $\pi(|G|)$. 
When $N\unlhd G$ and $\theta\in\mathrm{Irr}(N)$, we identify each character $\chi\in \mathrm{Irr}(G/N)$ with its inflation to $G$, thereby viewing $\mathrm{Irr}(G/N)$ as a subset of $\mathrm{Irr}(G)$;
we denote by $\mathrm{Irr}(G|\theta)$ the set of irreducible characters of $G$ lying over $\theta$; 
$\mathrm{Irr}(G|N)$ the complement of
$\mathrm{Irr}(G/N)$ in $\mathrm{Irr}(G)$,
$\mathrm{Irr}(G)^{\sharp}=\mathrm{Irr}(G|G)$;
for $\epsilon\in \{ \pm \}$,
we set
$\mathrm{cd}_{\epsilon}(G|N)=\{ \chi_\epsilon(1)\mid \chi\in \mathrm{Irr}(G|N) \}$ and $\omega_\epsilon(G|N)=\{ \pi(\chi_\epsilon(1))\mid \chi\in \mathrm{Irr}(G|N) \}$.
We also use $\chi(1)$ to denote the degree of a character $\chi$ of $G$.
Other notation will be recalled or defined when necessary.

We begin by recalling some well-known facts about character codegrees which will be employed freely in the
following.

\begin{lem} \label{lem: basic facts on codegree}
	Let $G$ be a finite group and $\chi\in \mathrm{Irr}(G)$.
	\begin{description}
		\item[(1)] If $N$ is a normal subgroup of $G$ contained in $\ker(\chi)$,
			then the codegrees of $\chi$ in $G$ and in $G/N$ coincide.
		\item[(2)] If $M$ is a subnormal subgroup of $G$, then $\psi_{-}(1)\mid \chi_-(1)$ for every irreducible constituent $\psi$ of $\chi_M$.
		In particular, $\pi(\psi_-(1))\subseteq \pi(\chi_-(1))$.
		\item[(3)] If a prime $p$ divides $|G|$, then $p$ divides
			$n$ for some $n\in \mathrm{cd}_-(G)$.
		\item[(4)] $\chi_{-}(1)=1$ if and only if $\chi=1_G$. 
		\item[(5)] Assume that $G=A\times B$ where $(|A|,|B|)=1$.
		If $\chi=\alpha\times \beta$ with $\alpha\in \mathrm{Irr}(A)$ and $\beta\in \mathrm{Irr}(B)$, then $\chi_-(1)=\alpha_-(1)\beta_-(1)$.
		Furthermore, $\mathrm{cd}_-(G)=\{ ab\mid a\in \mathrm{cd}_-(A), b\in \mathrm{cd}_-(B) \}$.
	\end{description}
\end{lem}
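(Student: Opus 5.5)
The plan is to prove the five parts separately, working directly from the definition $\chi_-(1)=|G:\ker(\chi)|/\chi(1)$ and standard facts from \cite{isaacs94}.

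\textbf{(1) and (4).} For (1), if $N\le \ker(\chi)$ then $\chi$ and its deflation $\bar\chi$ to $G/N$ have the same degree. Moreover $\ker(\bar\chi)=\ker(\chi)/N$, so
\[
|G/N:\ker(\bar\chi)|=|G:\ker(\chi)|,
\]
and the two codegrees agree. For (4), pass to $\bar G=G/\ker(\chi)$, on which $\chi$ is faithful. If $\chi_-(1)=1$, then $\chi(1)=|\bar G|$. Since $\chi(1)^2\le|\bar G|$, this forces $|\bar G|=1$, so $\chi=1_G$. The converse is immediate.

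\textbf{(2).} By induction along a subnormal series it suffices to treat $M\unlhd G$. Let $\psi$ be a constituent of $\chi_M$. By Clifford's theorem $\ker(\chi)\cap M=\bigcap_g\ker(\psi^g)=\mathrm{core}_G(\ker\psi)$. Setting $K=\ker(\chi)$, we may replace $G$ by $G/K$ using (1): then $MK/K\cong M/(M\cap K)$, and $\psi$ is inflated from this quotient with the same codegree. So we may assume $\chi$ is faithful.

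We then need $|M:\ker\psi|\,\chi(1)$ to divide $|G|\,\psi(1)$. By Clifford, $\chi(1)=e\,t\,\psi(1)$, where $t=|G:I_G(\psi)|$ and $e$ divides $|I_G(\psi):M|$ (projective representation theory, \cite[11.29]{isaacs94}). Hence $\chi(1)/\psi(1)$ divides $|G:M|$. It therefore suffices to show $|M:\ker\psi|\cdot|G:M|$ divides $|G|$, i.e.\ $|M:\ker\psi|$ divides $|M|$, which is trivial.

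Written out: $\chi_-(1)=|G|/\chi(1)=|G:M|\,|M|/(e\,t\,\psi(1))$ and $\psi_-(1)=|M|/(|\ker\psi|\,\psi(1))$, so
\[
\chi_-(1)/\psi_-(1)=|G:M|\,|\ker\psi|/(e\,t).
\]
Here $et$ divides $|G:M|$ as shown, so the quotient is an integer.

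The main point to check is exactly this divisibility of $et$ into $|G:M|$. It uses that $\chi(1)/\psi(1)$ divides $|G:M|$ for $M\unlhd G$, which is \cite[11.29]{isaacs94}.

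\textbf{(3).} Let $P\in\mathrm{Syl}_p(G)$ with $P\ne 1$. Since $\bigcap_{\chi}\ker\chi=1$, some $\chi$ has $P\not\le\ker\chi$. Then $p$ divides $|G:\ker\chi|$. Alternatively, restrict to a faithful setting: if $p\nmid\chi_-(1)$ for all $\chi$, then $|G:\ker\chi|_p=\chi(1)_p$ for every $\chi$. Now choose the regular character decomposition on $\bar G=G/\ker\chi$. Faithful $\chi$ with $\chi(1)_p=|\bar G|_p$ contradicts $\chi(1)^2\le|\bar G|$ only if $p\mid|\bar G|$, so $\chi(1)_p^2\le|\bar G|_p$ would force $|\bar G|_p=1$.

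More cleanly: $\chi(1)$ divides $|\bar G:Z(\bar G)|$ and $\chi(1)^2\le|\bar G:Z|$. I would argue that if $p$ divides $|\bar G|$ and $\chi(1)_p=|\bar G|_p$, then $\chi$ has $p$-defect zero in $\bar G$. But a faithful character of $p$-defect zero forces $O_p(\bar G)=1$, and every block of defect zero vanishes on $p$-singular elements. Since $\chi$ is faithful, $Z(\bar G)$ is cyclic and $\chi$ is nonzero on $Z$, so $Z$ is a $p'$-group. I would then pick $\chi$ with $P\not\le\ker\chi$ and $\chi$ linear on a suitable quotient if possible. The simplest route: take $\lambda$ a constituent of $(1_{P'})^G$ restricted argument, or apply (2) with $M$ subnormal. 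Choose a chain reaching $P$? $P$ is not subnormal in general, so instead use induction on $|G|$. If $G$ is not simple, take a minimal normal $N$. If $p\mid|G/N|$, use (1). Otherwise $p\mid|N|$: pick $1\ne\psi\in\mathrm{Irr}(N)$ with $p\mid\psi_-(1)$ (by induction if $N<G$) and any $\chi$ over $\psi$, then apply (2). If $G$ is simple, every nontrivial $\chi$ is faithful, so $\chi_-(1)=|G|/\chi(1)$. Some $\chi$ has $p\nmid\chi(1)$, for instance the trivial character's complement via $\sum\chi(1)^2=|G|$, or by Ito–Michler when $P$ is abelian. The worry is $\chi=1_G$; but $\sum_{\chi\ne1}\chi(1)^2=|G|-1\not\equiv0\pmod p$, so some nontrivial $\chi$ has $p\nmid\chi(1)$, whence $p\mid\chi_-(1)$.

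\textbf{(5).} This follows from $\ker(\alpha\times\beta)=\ker\alpha\times\ker\beta$ when the orders are coprime, together with $\chi(1)=\alpha(1)\beta(1)$. The set statement then follows because $\mathrm{Irr}(A\times B)=\mathrm{Irr}(A)\times\mathrm{Irr}(B)$.
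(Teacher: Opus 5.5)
Your proposal is correct. For (4) it is exactly the paper's argument: $\chi_-(1)=1$ forces $\chi(1)=|G:\ker\chi|$, and $\chi(1)^2\le|G:\ker\chi|$ then gives $\ker\chi=G$.

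For (1), (2), (3) and (5) the paper gives no argument; it cites \cite[Lemma 2.1]{liang16} and \cite[Lemma 2.5]{zeng25}. Your proposal instead makes the lemma self-contained.

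\begin{itemize}
\item Your (2) is the standard proof. After reducing to $M\unlhd G$ and $\chi$ faithful, $\chi_-(1)/\psi_-(1)=|G:M|\,|\ker\psi|\,\psi(1)/\chi(1)$ is an integer, because $\chi(1)/\psi(1)$ divides $|G:M|$ by \cite[Corollary 11.29]{isaacs94}.
\item Your final argument for (3) is valid. It inducts on $|G|$: use (1) if $p$ divides $|G:N|$; otherwise apply induction to $N$ and lift with (2). For simple $G$, nontrivial characters are faithful and $\sum_{\chi\neq 1_G}\chi(1)^2=|G|-1\not\equiv 0 \pmod p$. Since every codegree divides $|G|$, this also proves Lemma \ref{lem: cod pi(g)} directly, which the paper takes from \cite[Theorem A]{qian07}.
\item In (5), add the one-line justification of $\ker(\alpha\times\beta)=\ker\alpha\times\ker\beta$. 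With coprime orders, every subgroup $K$ of $A\times B$ equals $(K\cap A)\times(K\cap B)$, and $\ker(\alpha\times\beta)\cap A=\ker\alpha$, $\ker(\alpha\times\beta)\cap B=\ker\beta$. Without coprimality the equality fails, e.g. for $A=B=C_2$.
\end{itemize}

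When you write up (3), delete everything before ``use induction on $|G|$''. Those exploratory lines are not used, and some are false as stated:
\begin{itemize}
\item $P\not\le\ker\chi$ only gives $p\mid|G:\ker\chi|$, not $p\mid\chi_-(1)$.
\item $\chi(1)^2\le|\bar G|$ does not imply $\chi(1)_p^2\le|\bar G|_p$; the Steinberg character of $\mathrm{PSL}_2(q)$ has $\chi(1)_p=|G|_p$.
\item The appeal to Ito--Michler is irrelevant.
\end{itemize}
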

\begin{proof}
	We refer to \cite[Lemma 2.1]{liang16} for the proofs of (1), (2) and (3),
	and refer to \cite[Lemma 2.5]{zeng25} for the proof of (5).
       
        For (4), observe that $\chi_{-}(1)=1$ if and only if $|G:\ker(\chi)|=\chi(1)$.
		Since $\chi(1)^{2}\leq |G:\ker(\chi)|=\chi(1)$, which happens when $G=\ker(\chi)$,
        we conclude that $\chi_{-}(1)=1$ if and only if $\chi=1_G$.
\end{proof}

Let $G$ be a finite group and let $\mathrm{cd}_{\epsilon}(G)=\{\chi_{\epsilon}(1) \mid \chi \in \text{Irr}(G)\}$ for $\epsilon\in \{ \pm \}$.
Denote by $\rho_{\epsilon}(G)$ the set of primes that divide members in $\mathrm{cd}_{\epsilon}(G)$. 
The \emph{character degree (codegree) graph} $\Delta_+(G)$ (resp. $\Delta_-(G)$) of $G$ is the graph whose set of vertices is $\rho_+(G)$ (resp. $\rho_-(G)$), with primes $p, q$ in $\rho_+(G)$ (resp. $\rho_-(G)$) joined by an edge if $pq$ divides $a$ for some character degree $a \in \mathrm{cd}_+(G)$ (resp. character codegree $a \in \mathrm{cd}_-(G)$).
In the next result, we list some information of the character degree (resp. codegree) graphs of nonabelian simple groups.

\begin{thm}\label{thm: simp gp disc}
	Let $S$ be a nonabelian simple group.
    Then the following holds.
	\begin{description}
		\item[(1)] The character degree graph $\Delta_{+}(S)$ is disconnected if and only if $S=\mathrm{PSL}_2(q)$ with $q\geq 5$.
		In particular, if $G$ is a direct product of more than one copy of $S$, then $\Delta_{+}(G)$ is connected.
		\item[(2)] The character codegree graph $\Delta_{-}(S)$ is a complete graph.
	\end{description}
\end{thm}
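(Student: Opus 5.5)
This theorem collects known results from the literature rather than proving anything new, so I would not derive it from first principles. I would reduce each part to a citation, plus a short argument for the "in particular" clause of (1).

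For (1), the main claim is the classification of nonabelian simple groups with disconnected degree graph. This is due to White (and to Lewis--White in their work on disconnected character degree graphs), and it rests on the classification of finite simple groups. The strategy behind that result is a case check. For alternating groups one finds characters whose degrees are divisible by all the relevant primes. For sporadic groups one reads the character tables. For groups of Lie type in characteristic $p$, the Steinberg character has degree $|S|_p$, and one uses unipotent or semisimple character degrees to connect every prime to $p$ or to each other. The exceptions are exactly the groups $\mathrm{PSL}_2(q)$. Their degrees are $1,q,q+1,q-1$ and $(q\pm1)/2$, so $p$ is isolated from the primes dividing $q^2-1$. I would simply cite this.

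For the "in particular" clause, let $G=S_1\times\cdots\times S_k$ with $k\ge 2$ and each $S_i\cong S$. Then $\mathrm{cd}_+(G)$ consists of all products $a_1\cdots a_k$ with $a_i\in\mathrm{cd}_+(S)$. Given primes $p,q\in\rho_+(G)=\rho_+(S)$, choose $a_1,a_2\in \mathrm{cd}_+(S)$ with $p\mid a_1$ and $q\mid a_2$, and set all other factors equal to $1$. The product $a_1a_2$ is then divisible by $pq$. Hence $\Delta_+(G)$ is complete, and in particular connected.

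For (2), I would cite the known result that the codegree graph of a nonabelian simple group is complete. This is due to Liang--Yang/Qian and later work on codegree graphs, and it is again proved via the classification. The idea is to exhibit, for each pair of primes $p,q$ dividing $|S|$, a character $\chi$ with $pq\mid |S|/\chi(1)$. Since $\ker\chi=1$ for every nontrivial $\chi$, this codegree equals $|S|/\chi(1)$. Typically one takes $\chi$ of small degree or of $p$-defect zero, so that the complementary part of the order contains both primes.

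The only genuine obstacle is the classification-based case analysis hidden in both cited results. In the paper I would just give the references and write out the two-line direct-product argument for the last clause of (1).
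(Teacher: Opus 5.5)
Your proposal is correct and follows the paper, which proves this theorem purely by citation: White \cite{white09} for (1) and \cite[Lemma 2.3]{qian07} for (2). Your argument that $\Delta_+(S^k)$ is in fact complete for $k\ge 2$ is a valid self-contained justification of the last clause of (1), which the paper simply leaves to the citation.

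One slip in your side remark on (2), which does not affect the proof since you rely on the citation: a character of $p$-defect zero has $p\nmid |S|/\chi(1)$, so it is the wrong kind of character for this purpose. What you want is $\chi(1)_p<|S|_p$ and $\chi(1)_q<|S|_q$, for example a nontrivial character lying in both principal blocks, or one of $\{p,q\}'$-degree.
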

\begin{proof}
   We refer to \cite{white09} for the proof of (1) and \cite[Lemma 2.3]{qian07} for the proof of (2).
\end{proof}

Note that, for $\epsilon\in \{ \pm  \}$, one always has $\rho_{\epsilon}(G)=\bigcup_{X\in \omega_\epsilon(G)} X$.
According to \cite[Theorem A]{qian07}, we have the following result when $\epsilon=-$.

\begin{lem}\label{lem: cod pi(g)}
	If $G$ is a finite group,
	then $\bigcup_{X\in \omega_-(G)} X=\pi(G)$.
\end{lem}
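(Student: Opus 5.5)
The lemma says every prime divisor of $|G|$ divides some irreducible character codegree; since $\rho_-(G)=\bigcup_{X\in\omega_-(G)}X$, it is equivalent to $\rho_-(G)=\pi(G)$. The inclusion $\rho_-(G)\subseteq\pi(G)$ is trivial: each codegree $\chi_-(1)=|G:\ker(\chi)|/\chi(1)$ divides $|G:\ker(\chi)|$, hence divides $|G|$. The reverse inclusion is exactly part (3) of Lemma~\ref{lem: basic facts on codegree}. So the proof is essentially a restatement, and I would write it that way, citing \cite[Theorem A]{qian07} as the source.

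If one wants the argument behind (3) rather than just the citation, I would give the following route. Let $p\mid |G|$. Pick a chief series of $G$ and let $M/L$ be a chief factor whose order is divisible by $p$. Such a factor exists because $|G|$ is the product of the chief factor orders.

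Work in $\bar G=G/L$, with $\bar M=M/L$ a minimal normal subgroup. By Lemma~\ref{lem: basic facts on codegree}(1), codegrees of characters of $\bar G$ are codegrees of $G$, so it suffices to find $\chi\in\mathrm{Irr}(\bar G)$ with $p\mid\chi_-(1)$.

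Choose $\chi\in\mathrm{Irr}(\bar G\mid\bar M)$. Then $\ker\chi\cap\bar M=1$ by minimality of $\bar M$, so $\bar M$ embeds in $\bar G/\ker\chi$. Passing to $\bar G/\ker\chi$, we may assume $\chi$ is faithful and $\bar M$ is a minimal normal subgroup. We then need $p\mid |\bar G|/\chi(1)$.

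The main obstacle is this divisibility. We know $\chi(1)\mid|\bar G:Z(\bar G)|$, but that only helps when $p$ divides $|Z(\bar G)|$. In general I would instead use Lemma~\ref{lem: basic facts on codegree}(2) with the normal subgroup $\bar M$: for an irreducible constituent $\psi$ of $\chi_{\bar M}$, $\psi_-(1)$ divides $\chi_-(1)$. This reduces the problem to showing that some $\psi\in\mathrm{Irr}(\bar M)$ has codegree divisible by $p$.

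We have $\bar M\cong S^k$, and there are two cases.

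\textbf{Abelian case.} Any nontrivial $\psi$ has codegree $|\bar M:\ker\psi|$, a power of $p$ greater than $1$, so $p$ divides it.

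\textbf{Nonabelian case.} Use Theorem~\ref{thm: simp gp disc}(2): $\Delta_-(S)$ is a complete graph on $\rho_-(S)$. Combined with the known fact (again \cite{qian07}) that $\rho_-(S)=\pi(S)$, some character of $S$ has codegree divisible by $p$. Tensoring it with trivial characters on the other factors gives the required $\psi$ of $\bar M$.

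Since the simple-group input ultimately rests on \cite{qian07} anyway, in the paper I would simply quote that result.
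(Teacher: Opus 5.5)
Your main argument is the paper's own: the paper gives no proof and simply quotes \cite[Theorem A]{qian07}, and the statement is indeed just Lemma~\ref{lem: basic facts on codegree}(3) together with the trivial inclusion $\rho_-(G)\subseteq\pi(G)$. Your optional self-contained sketch needs one repair, because an arbitrary $\chi\in\mathrm{Irr}(\bar G\mid\bar M)$ can fail when $\bar M$ is nonabelian (a $p$-defect-zero character of a simple group has $p'$-codegree): instead, first take a nontrivial $\alpha\in\mathrm{Irr}(S)$ of $p'$-degree, which exists by \cite[Corollary 12.2]{isaacs94} as in the proof of Lemma~\ref{lem: simp -} and satisfies $p\mid |S|/\alpha(1)=\alpha_-(1)$, then set $\psi=\alpha\times 1\times\cdots\times 1$ and choose $\chi$ lying over $\psi$, which also removes the circular appeal to \cite{qian07} and makes the completeness of $\Delta_-(S)$ unnecessary.
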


\begin{prop}\label{prop: sol - <=3}
	 Let $G$ be a finite group.
	 Then the following hold.
	 \begin{description}
		\item[(1)] If $|\omega_-(G)|\leq 2$, then $G$ is a $p$-group for some prime $p$.
		\item[(2)] Assume that $G$ is solvable. 
		If $|\omega_-(G)|= 3$, then $\pi(G)= \{ p,q \}$, $\omega_-(G)=\{ \varnothing, \{ p \}, \pi \}$ where $\pi\in \{ \{ q \}, \pi(G) \}$,
		and $G/G^{\infty}$ is a $p$-group (where $G^{\infty}$ denotes the nilpotent residual of $G$).
	 \end{description}
\end{prop}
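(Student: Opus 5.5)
The plan is to exploit two constant features of $\omega_-(G)$. First, $\varnothing\in\omega_-(G)$ always, coming from $1_G$, and by Lemma \ref{lem: basic facts on codegree}(4) no other character contributes $\varnothing$. Second, by Lemma \ref{lem: cod pi(g)} the union of the members of $\omega_-(G)$ is $\pi(G)$. We also use that $\omega_-(G/N)\subseteq\omega_-(G)$ for every $N\unlhd G$, by Lemma \ref{lem: basic facts on codegree}(1). For an abelian quotient $A=G/G'$, a linear character $\lambda$ has codegree equal to the order $o(\lambda)$. Hence $\omega_-(A)$ contains $\{r\}$ for each $r\in\pi(A)$, and it contains $\{r,s\}$ whenever $r\neq s$ both divide $|A|$, by taking a product of characters of orders $r$ and $s$.

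\textbf{Part (1).} Assume $G\neq 1$, so $\omega_-(G)=\{\varnothing,X\}$ with $X=\pi(G)$, and suppose $|\pi(G)|\ge 2$. If $G'<G$, a linear character of prime order $p$ yields $\{p\}=X=\pi(G)$, a contradiction; so $G$ is perfect. Take $M$ maximal normal in $G$, so that $S=G/M$ is nonabelian simple. Every nontrivial $\chi\in\mathrm{Irr}(S)$ then has $\pi(\chi_-(1))=\pi(G)\supseteq\pi(S)$. In particular $\chi(1)_p<|S|_p$ for every $p\in\pi(S)$, which says that $S$ has no $p$-block of defect zero for any prime $p$. I would contradict this by invoking the known fact that every nonabelian simple group has a block of defect zero for some prime. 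This follows from Granville--Ono for $p\ge 5$ and from the classical results for alternating and sporadic groups, and the Steinberg character handles groups of Lie type in their defining characteristic. A character $\chi$ of defect zero gives $p\notin\pi(\chi_-(1))$, while $\chi_-(1)\neq 1$ by Lemma \ref{lem: basic facts on codegree}(4). This is the step I expect to be the main obstacle, since it needs an external result; an alternative would be to use Theorem \ref{thm: simp gp disc}(2) together with a case check.

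\textbf{Part (2).} Let $G$ be solvable with $|\omega_-(G)|=3$.

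1. Apply the abelian observation to $G/G'$. If $|\pi(G/G')|\ge 2$, then $\omega_-(G)$ contains $\varnothing,\{p\},\{q\},\{p,q\}$, which is too many. So $G/G'$ is a nontrivial $p$-group for a single prime $p$, and $\omega_-(G)=\{\varnothing,\{p\},\pi\}$ for one further set $\pi$.

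2. Since $G/G^{\infty}$ is nilpotent, it is the direct product of its Sylow subgroups. If two primes divided its order, Lemma \ref{lem: basic facts on codegree}(5) would again produce $\{p\},\{q\},\{p,q\}$. So $G/G^{\infty}$ is a $p$-group, and it is the same prime $p$ because $G/G'$ is a quotient of $G/G^{\infty}$.

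3. $G$ is not a $p$-group, because then $|\omega_-(G)|=2$. Choose $M\unlhd G$ maximal such that $G/M$ is not a $p$-group. Then $G/M$ has a unique minimal normal subgroup $V/M$, and solvability together with maximality forces $V/M$ to be an elementary abelian $r$-group with $r\neq p$ and $G/V$ a $p$-group. Thus $G/M$ is a $\{p,r\}$-group, and $\omega_-(G/M)$ has a member containing $r$, namely from any character not containing $V$ in its kernel. That member lies in $\omega_-(G)\setminus\{\varnothing,\{p\}\}$, so it equals $\pi$, and therefore $r\in\pi\subseteq\{p,r\}$.

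4. By Lemma \ref{lem: cod pi(g)}, $\pi(G)=\{p\}\cup\pi=\{p,q\}$ with $q=r$. Finally, $\pi$ is either $\{q\}$ or $\{p,q\}=\pi(G)$, which is exactly the claimed description.
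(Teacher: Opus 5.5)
Your proposal is correct and follows essentially the paper's route. In (1) the contradiction again comes from Lemma \ref{lem: cod pi(g)} together with a defect-zero character of a nonabelian simple section (Granville--Ono). In (2) the conclusion again comes from showing that $G/G^{\infty}$ is a $p$-group and that some $\{p,r\}$-quotient forces $\pi\subseteq\{p,r\}$; your maximal non-$p$-quotient $G/M$ plays the role of the paper's $G/K$, where $G^{\infty}/K$ is a chief factor.

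The one unexplained step is the claim that every character of $G/M$ not containing $V$ in its kernel has codegree divisible by $r$. It does hold: such a $\chi$ is faithful on $G/M$, and $\chi(1)$ divides $|G:V|$ by Ito's theorem. You could also skip it, since Lemma \ref{lem: basic facts on codegree}(3) applied to $G/M$ already gives the member of $\omega_-(G/M)$ containing $r$ that you need.
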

\begin{proof}
	(1)  Suppose that $G$ is nilpotent.
	Since $|\omega_-(G)|\leq 2$, Lemma \ref{lem: basic facts on codegree} implies that $G$ must be a $p$-group.
	So, it suffices to show that $G$ is nilpotent.
	
	Let $G$ be a counterexample of minimal order.
    For any normal subgroup $N\unlhd G$, as $\omega_-(G/N)\subseteq \omega_-(G)$,
   the minimality of $G$ yields that $G$ has a unique minimal normal subgroup $N$ such that 
   $G/N$ is nilpotent while $G$ itself is not nilpotent.
   We assume first that $G>N$.
   By Lemma \ref{lem: basic facts on codegree},
    $G/N$ is a $p$-group,
	so $\omega_-(G/N)=\{ \varnothing, \{ p \} \}$.
   Since $|\omega_-(G)|\leq 2$, we must have $\omega_-(G)=\{\varnothing, \{ p \} \}$.
   Applying Lemma \ref{lem: cod pi(g)},
   we conclude a contradiction that $G$ is a $p$-group.
   Next, we assume that $G=N$.
   Then $G$ is a nonabelian simple group. 
   Set $\omega_-(G)=\{\varnothing,  \pi \}$.
   By Lemma \ref{lem: cod pi(g)} again, we deduce that $\pi=\pi(G)$.   
   However, 
   since $G$ always has a $p$-defect zero irreducible character $\chi$ for some $p\in \pi$ by \cite[Corollary 2]{granville96},
   we conclude the final contradiction that $\pi(\chi_-(1))\neq \pi(G)$.

   (2) Assume that $G$ is solvable. 
   Set $N=G^{\infty}$.
   As $G/N$ is nilpotent, we claim that $G/N$ is a $p$-group for some prime $p$.
   In fact, otherwise there exists a prime $q\in \pi(G/N)$ distinct from $p$;
   by Lemma \ref{lem: basic facts on codegree}, $\{ \varnothing, \{ p \}, \{ q \}, \{ p,q \} \} \subseteq \omega_-(G/N) \subseteq \omega_-(G)$ which contradicts $|\omega_-(G)|= 3$.
   Therefore, $\omega_-(G/N)=\{ \varnothing, \{ p \} \}$.
   Set $\omega_-(G)=\{ \varnothing, \{ p \}, \pi \}$.
   Let $N/K$ be a chief factor of $G$.
   Since $N$ is the nilpotent residual of the solvable group $G$, $N/K$ is an elementary abelian $q$-group with $q\neq p$, and $\pi(G/K)=\{ p,q \}$.
   By \cite[Lemma 2.6]{zeng25}, $q\mid n$ for some $n\in \mathrm{cd}_-(G/K|N/K)$.
   Therefore, $q\in \pi(n) \subseteq \{ p,q \}$,
   which in particular gives $\pi(n)=\pi$.
   By Lemma \ref{lem: cod pi(g)}, $\pi(G)=\{ p \}\cup \pi$, so we conclude that $\pi(G)=\{ p,q \}$.
\end{proof}

We also have a similar result for $\omega_+(G)$.

\begin{prop}
	Let $G$ be a finite group.
	Then the following hold.
		\begin{description}
		\item[(1)] $|\omega_+(G)|=1$ if and only if $G$ is abelian.
		\item[(2)]  $\omega_+(G)=\{ \varnothing, \pi \}$ if and only if 
		 $G=V \rtimes H$ is nonabelian where $H\in \mathrm{Hall}_{\pi}(G)$ and $V$ is abelian
		 such that one of the following holds.
		 \begin{description}
			\item[(2a)]  $H$ is a $p$-group for some prime $p$.
			\item[(2b)]  $|\pi|>1$, $H$ is abelian and $\pi(|H:\mathbf{C}_{H}(v)|)=\pi$ for each $v\in V-\mathbf{Z}(G)$.
		 \end{description}
	\end{description}
\end{prop}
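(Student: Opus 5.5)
Part (1) is immediate. $1_G$ gives $\varnothing\in\omega_+(G)$, so $|\omega_+(G)|=1$ forces every irreducible character to be linear. That happens exactly when $G$ is abelian.

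For (2), I would first do the ``if'' direction.

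\emph{Case (2a).} $V$ is an abelian normal subgroup with $G/V\cong H$ a $p$-group, so Ito's theorem gives that every $\chi(1)$ divides $|H|$. Hence each $\chi(1)$ is a power of $p$. Since $G$ is nonabelian, $\omega_+(G)=\{\varnothing,\{p\}\}$ with $\pi=\{p\}$.

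\emph{Case (2b).} Here $H$ is abelian, so $G$ is metabelian with abelian normal subgroup $V$ and abelian complement. Every $\chi\in\mathrm{Irr}(G)$ lies over some $\lambda\in\mathrm{Irr}(V)$, and $\lambda$ extends to its stabilizer $V\rtimes H_\lambda$ (by Gallagher, since $(|V|,|H|)=1$). So $\chi(1)=|H:H_\lambda|$ (Clifford correspondence, with $H_\lambda$ abelian). By coprime action, the orbit sizes of $H$ on $\mathrm{Irr}(V)$ match those on $V$ (Brauer's permutation lemma for coprime abelian action, applied to each cyclic subgroup, gives equal numbers of fixed points). More precisely, the multisets of stabilizer subgroups on $V$ and on $\widehat V$ coincide. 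The hypothesis $\pi(|H:\mathbf C_H(v)|)=\pi$ for $v\notin\mathbf Z(G)$, and $\mathbf C_H(v)=H$ for $v\in \mathbf Z(G)\cap V$, then show that every orbit size is $1$ or has prime set exactly $\pi$. Hence $\omega_+(G)=\{\varnothing,\pi\}$. The cleanest way to transfer the condition from $V$ to $\widehat V$ is to note that $\pi(|H:H_\lambda|)=\pi$ fails iff $H_\lambda$ contains a full Sylow $r$-subgroup $R$ of $H$ for some $r\in\pi$ with $H_\lambda\neq H$. Then use $|\mathbf C_{\widehat V}(R)|=|\mathbf C_V(R)|$ together with $V=\mathbf C_V(R)\times[V,R]$.

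For ``only if'', suppose $\omega_+(G)=\{\varnothing,\pi\}$.

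\emph{Step 1 (solvability).} I would establish solvability first. Every nonlinear degree has the same prime set $\pi$, so $\Delta_+(G)$ is complete with a very rigid structure. A nonabelian simple section would give degrees with different prime sets; Theorem \ref{thm: simp gp disc} and standard results on degrees of almost simple groups rule this out. Alternatively, I would cite known classifications: a group all of whose nonlinear degrees have the same prime set is solvable (for instance via a Thompson-type argument, since each $p\in\pi$ divides every nonlinear degree, so $G$ has a normal $p$-complement).

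This Thompson argument is in fact the key tool. For each $p\in\pi$, $p$ divides every nonlinear degree, so Thompson's theorem gives $G$ a normal $p$-complement. Intersecting over $p\in\pi$, $G$ has a normal $\pi$-complement $V$. Moreover no prime outside $\pi$ divides any degree. By Ito--Michler, $G$ has a normal abelian Sylow $r$-subgroup for every $r\notin\pi$, so $V$ is abelian, and Schur--Zassenhaus gives a complement $H\in\mathrm{Hall}_\pi(G)$.

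\emph{Step 2 (the two cases).} If $|\pi|=1$ we are in (2a). If $|\pi|>1$, then $H\cong G/V$ has degrees in $\{1\}\cup\{n:\pi(n)=\pi\}$. Since $H$ is nilpotent, its irreducible degrees are products of degrees of its Sylow subgroups. If some Sylow $P$ were nonabelian, a character of degree $p^a$ would appear, which is impossible. Hence $H$ is abelian. Finally, the orbit condition on $\widehat V$ (from the degrees $|H:H_\lambda|$) transfers back to $V$ by the same coprime-action duality as in the ``if'' direction. Note that $v\in\mathbf Z(G)$ iff $\mathbf C_H(v)=H$, since $V$ is abelian.

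The main obstacle I expect is the duality step between stabilizers in $V$ and in $\mathrm{Irr}(V)$. I would handle it Sylow-by-Sylow with the fixed-point count and the decomposition $V=\mathbf C_V(R)\times[V,R]$.
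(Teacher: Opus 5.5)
Your proposal is correct and follows essentially the paper's route: It\^o--Michler, Thompson's normal $p$-complement theorem and Schur--Zassenhaus give the abelian normal $\pi$-complement $V$ with nilpotent complement $H$; the Sylow argument forces $H$ to be abelian when $|\pi|>1$; and all degrees are $|H:H_\lambda|$ because $\lambda$ extends to its inertia group. The one difference is that the paper cites [Isaacs, Theorem 13.24] to pass between stabilizers on $\mathrm{Irr}(V)$ and on $V$, whereas you verify the needed equality $|\mathbf{C}_{\mathrm{Irr}(V)}(R)|=|V/[V,R]|=|\mathbf{C}_V(R)|$ directly, which is fine. Two side remarks: Brauer's permutation lemma on cyclic subgroups would not suffice on its own for non-cyclic $R$, and your preliminary appeal to simple-group results for solvability is unnecessary.
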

\begin{proof}
	Since (1) is trivial, we only prove (2) here.

 We assume first that $\omega_+(G)=\{ \varnothing, \pi \}$.
    By It\^o-Michler theorem (\cite[Theorem 5.4]{michler86}), $G$ has an abelian normal 
	$\pi$-complement $V$.
	The Schur-Zassenhaus theorem then yields that $G=V \rtimes H$, where $H\in \mathrm{Hall}_{\pi}(G)$.
	For every prime $p\in \pi$, since $p\mid \alpha(1)$ for each $\alpha\in \mathrm{Irr}(H|H')$,
	\cite[Corollary 12.2]{isaacs94} ensures that $H$ is $p$-nilpotent for each $p\in \pi$.
	Therefore, $H$ is nilpotent.
    If $|\pi|=1$, then $H$ is a $p$-group for some prime $p$.
	We next consider the case where $|\pi|>1$.
    In this scenario, $H$ must be abelian.
	In fact, otherwise, there exists some $p\in \pi$ such that $\pi(\chi(1))=\{ p \}$ for some $\chi\in \mathrm{Irr}(G/V)$, a contradiction.
	For every $\lambda\in \mathrm{Irr}(V)^\sharp$, as $\lambda$ extends to $\mathrm{I}_{G}(\lambda)$,
	we have $\chi(1)=|H:\mathrm{I}_{H}(\lambda)|$ for every $\chi\in \mathrm{Irr}(G|\lambda)$.
     By \cite[Theorem 13.24]{isaacs94}, (2b) holds.

	 We assume next that $G=V \rtimes H$ is nonabelian where $H\in \mathrm{Hall}_{\pi}(G)$ and $V$ is abelian.
	If $H$ is a $p$-group for some prime $p$, then we are done by \cite[Theorem 6.15]{isaacs94}.
	Now suppose that $|\pi|>1$, $H$ is abelian, and that $\pi(|H:\mathbf{C}_{H}(v)|)=\pi$ for each $v\in V-\mathbf{Z}(G)$. 
    For each $\lambda\in \mathrm{Irr}(V)^\sharp$, as $\lambda$ extends to $\mathrm{I}_{G}(\lambda)$ and $\mathrm{I}_{G}(\lambda)/V$ is abelian,
	it follows that $\chi(1)=|H:\mathrm{I}_{H}(\lambda)|$ for every $\chi\in \mathrm{Irr}(G|\lambda)$. 
    Therefore, applying \cite[Theorems 6.15, 13.24]{isaacs94}, we conclude that $\omega_+(G)=\{ \varnothing, \pi \}$. 
\end{proof}

Prior to proving the subsequent result, 
we recall some basic facts from the representation theory of symmetric and alternating groups, following \cite[Lect. 4, 5]{repSymmetic}.
Let $n$ be a positive integer.
A \emph{partition} $\lambda=(\lambda_1,\dots,\lambda_l)$ of $n$ is a weakly decreasing sequence of positive integers summing to $n$.
The irreducible characters of the symmetric group $\mathsf{S}_n$ (of degree $n$) are naturally parametrized by the partitions of $n$,
with $\chi^\lambda$ denoting the character corresponding to $\lambda$.
Moreover, the degree of $\chi^{\lambda}$ is given explicitly by the hook length formula \cite[Lect. 4, 4.12]{repSymmetic}.

\begin{lem}\label{lem: simp -}
	Let $S$ be a nonabelian simple group.
	Then $|\omega_{-}(S)|\geq 4$.
\end{lem}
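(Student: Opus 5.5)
Since $S$ is simple, every nontrivial $\chi\in\mathrm{Irr}(S)$ is faithful, so $\chi_-(1)=|S|/\chi(1)$ and $\pi(\chi_-(1))=\{p\in\pi(S)\mid |S|_p\nmid \chi(1)\}$. Thus $\pi(\chi_-(1))$ is the complement in $\pi(S)$ of the set of primes $p$ for which $\chi$ has $p$-defect zero. By Lemma \ref{lem: basic facts on codegree}(4), $\varnothing\in\omega_-(S)$ comes only from $1_S$. By Theorem \ref{thm: simp gp disc}(2) and Lemma \ref{lem: cod pi(g)}, some codegree has at least two prime divisors. The argument in Proposition \ref{prop: sol - <=3}(1) already shows $|\omega_-(S)|\geq 3$: otherwise $\omega_-(S)=\{\varnothing,\pi(S)\}$, contradicting the existence of a $p$-defect zero character \cite{granville96}.

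So the task is to produce three distinct nonempty sets of the form $\pi(S)\setminus D(\chi)$, where $D(\chi)$ is the set of primes for which $\chi$ has defect zero. Equivalently, I need three nontrivial characters with pairwise distinct defect-zero prime sets. One of them can be a non-defect-zero character, giving the set $\pi(S)$; for example, any nontrivial character of degree coprime to some $p\in\pi(S)$ works, such as a character of degree $|S:P|_{p'}$-type, or simply a small-degree character. The plan is a case analysis via the classification.

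\textbf{Alternating groups $\mathsf{A}_n$, $n\geq5$.} Restrict characters $\chi^\lambda$ of $\mathsf{S}_n$ with $\lambda\ne\lambda'$. The character $\chi^{(n-1,1)}$ has degree $n-1$, which is defect zero for no prime when $n\geq 5$, except for small prime-power coincidences that must be checked. This gives codegree prime set $\pi(S)$. Next, $p$-cores (hooks) give $p$-defect zero characters: by Granville--Ono they exist for $p\geq5$, and for $p=2,3$ staircase-type partitions can be used. I would pick the largest prime $p\le n$ and a hook-shaped partition whose hook lengths avoid $p$ but not $2$, and another avoiding $2$ (staircase). This yields sets missing $p$ only and missing $2$ (and possibly others), which I would verify are distinct via the hook length formula. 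The small $n$ (5, 6, 7, 8) I would check from the character tables.

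\textbf{Groups of Lie type in characteristic $p$.} The Steinberg character $\mathrm{St}$ has degree $|S|_p$, so its codegree set is $\pi(S)\setminus\{p\}$. Unipotent or semisimple characters of $p'$-degree, such as Deligne--Lusztig characters $\pm R_T^\theta$ in general position with degree $|S:T|_{p'}$, have defect zero exactly for primes not dividing $|T|$. Choosing two tori of coprime orders, as in Zsigmondy-type primitive prime divisors, gives two further distinct nonempty sets containing $p$. For $\mathrm{PSL}_2(q)$ this is explicit: the degrees $q\pm1$ give codegree sets $\pi(q(q+1))$-type versus $\pi(q(q-1))$-type, so these are distinct.

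\textbf{Sporadic groups and Tits group.} Inspect the ATLAS character tables directly.

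The main obstacle is the Lie-type case in general, namely guaranteeing two tori whose Deligne--Lusztig characters in general position exist and yield different sets, together with small-rank or small-$q$ exceptions. For this I would lean on Zsigmondy primes and tabulated regular semisimple classes, handling exceptions by hand.
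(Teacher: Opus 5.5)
Your reformulation is correct: for $1_S\neq\chi\in\mathrm{Irr}(S)$, the set $\pi(\chi_-(1))$ is $\pi(S)$ minus the set $D(\chi)$ of primes for which $\chi$ has defect zero. So you need three nontrivial characters with pairwise distinct $D(\chi)$. But the proposal never actually produces them in either infinite family, and several steps it relies on are false.

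\emph{Alternating groups.} Staircase partitions are $2$-cores only when $n$ is triangular. For most $n$ the group $\mathsf{A}_n$ has no character of $2$-defect zero at all; this is precisely the alternating-group exception in \cite{granville96}. So your ``set missing $2$'' does not exist in general. The character $\chi^{(n-1,1)}$ has $(n-1)$-defect zero whenever $n-1$ is prime ($\mathsf{A}_6,\mathsf{A}_8,\mathsf{A}_{12},\dots$), not just in small cases. Worse, your standing assumption that some nontrivial character has $D(\chi)=\varnothing$, so that $\pi(S)\in\omega_-(S)$, is false. In $\mathsf{A}_5$, $\mathsf{A}_6$ and $\mathrm{PSL}_2(7)$ every nontrivial character has defect zero for some prime. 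A degree coprime to one prime does not help either: the degree-$4$ character of $\mathsf{A}_5$ has $2$-defect zero.

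\emph{Groups of Lie type.} Here nothing is proved. For every type and every $q$ you would need two tori with different prime sets carrying characters in general position. These characters must be trivial on the centre, so that they live on $S$. Their defect-zero sets must be computed in $S$ rather than in $G^F$, and all small-$q$ and small-rank exceptions must be handled. You flag this yourself as the main obstacle; it is the bulk of the problem and is left open.

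\emph{The missing idea.} The paper eliminates the Lie-type case altogether by arguing by contradiction. Suppose $\omega_-(S)=\{\varnothing,\pi_1,\pi_2\}$.
\begin{itemize}
\item Pick $\ell\in\pi(S)$ with $\ell\geq5$. By \cite{granville96} there is a character of $\ell$-defect zero, and by \cite[Corollary 12.2]{isaacs94} a nontrivial one of $\ell'$-degree, so we may take $\ell\in\pi_1\setminus\pi_2$.
\item Completeness of $\Delta_-(S)$ (Theorem~\ref{thm: simp gp disc}(2)) gives, for each $q\in\pi(S)-\{\ell\}$, a codegree divisible by $\ell q$. Its prime set must be $\pi_1$, hence $\pi_1=\pi(S)$.
\item Consequently every character of $r$-defect zero, for any $r\in\pi(S)$, has codegree set $\pi_2$. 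So $\pi_2\subseteq\{2,3\}$, and $S$ has no $r$-defect-zero character for some $r\in\{2,3\}$.
\item By \cite{granville96} this leaves only alternating groups, with sporadic groups and small $n$ checked by computer.
\end{itemize}
For $\mathsf{A}_n$ a single character then suffices. Let $p$ be the largest prime $\leq n$, so $n/2<p$ by Bertrand. The $p$-core $(p,1^{n-p})$ if $n>p$, or $(p-2,2)$ if $n=p$, restricts to a $p$-defect-zero $\theta\in\mathrm{Irr}(\mathsf{A}_n)$. Its codegree has a prime divisor larger than $3$, contradicting $\pi_2\subseteq\{2,3\}$. So rather than building three distinct sets in every family, you only need one explicit character in $\mathsf{A}_n$.
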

\begin{proof}
	Suppose for a contradiction that $|\omega_{-}(S)|< 4$.
	By Proposition \ref{prop: sol - <=3}(1), it forces $|\omega_{-}(S)|=3$.
	If $S$ is a sporadic group or the Tits group, then 
	a computation via $\mathsf{GAP}$ \cite{gap} yields the contradiction that $|\omega_{-}(S)|>4$.

	Let $\ell\in \pi(S)-\{ 2,3 \}$.
	By \cite[Corollary 2]{granville96}, $S$ admits an $\ell$-defect zero character $\beta\in \mathrm{Irr}(S)$.
    In particular, $\ell\notin \pi(\beta_-(1))$.
	By \cite[Corollary 12.2]{isaacs94}, $S$ also has a nonprincipal irreducible character $\alpha$ of $\ell'$-degree.
	So, $\ell\in \pi(\alpha_-(1))$.
	Set $\pi_1=\pi(\alpha_-(1))$ and $\pi_2=\pi(\beta_-(1))$.
	Then $\omega_-(S)=\{ \varnothing, \pi_1, \pi_2 \}$ where $\ell\in \pi_1$ and $\ell\notin \pi_2$.
	For every $q\in \pi(S)-\{ \ell\}$,
	Theorem \ref{thm: simp gp disc}(2)
	guarantees the existence of a character
	$\chi\in \mathrm{Irr}(S)$ 
	such that $\ell q\mid \chi_-(1)$.
	This implies $\pi_1=\pi(S)$.
	For $r\in \pi(S)$, if $\gamma\in \mathrm{Irr}(S)$ has $r$-defect zero,
    then $r\notin\pi(\gamma_-(1))=\pi_2$.
	Since $\pi_2\neq \varnothing$,
	\cite[Corollary 2]{granville96} further implies that $\pi_2 \subseteq \{ 2,3 \}$ and $S\cong \mathsf{A}_n$ with $n\geq 5$.
    For $5\leq n\leq 15$, 
	direct verification via $\mathsf{GAP}$ \cite{gap}
	leads to a contradiction.
	We may thus next assume that $n>15$.
	Let $p$ denote the largest prime divisor of $|S|$, and note that $p$ is also the largest prime divisor of $|\mathsf{S}_n|$.
	By Bertrand-Chebyshev Theorem \cite[Theorem 1.9]{everest05}, we have $8\leq n/2<p\leq n$,
	which in particular implies that $|\mathsf{S}_n|_p=p$.
    \begin{table}[!h]
	{\small 
\renewcommand{\arraystretch}{1}
\begin{center}
\begin{tabular}{cccc}
 \toprule
                          Condition on $n$  & Partition $\lambda$ of $n$ & $\chi^\lambda(1)$ & $\theta_-(1)$\\
                           \midrule
$n=p$      & $(p-2,2)$             & $\frac{p(p-3)}{2}$ & $\frac{(p-1)!}{p-3}$ \\
$n>p$      & $(p,1^{n-p})$             & $\frac{n!}{n(p-1)!(n-p)!}$ & $\frac{n(p-1)!(n-p)!}{2}$ \\
\bottomrule
\end{tabular}
\caption{Specific degrees and codegrees.}
\label{tab:1}
\end{center}}
\end{table}
	Let $\chi^\lambda\in \mathrm{Irr}(\mathsf{S}_n)$ be the character corresponding to a non-self-conjugate partition $\lambda$ listed in Table \ref{tab:1}.
	Restricting this character to $S$ gives an irreducible character
	$\theta:=(\chi^\lambda)_S\in \mathrm{Irr}(S)$ (see \cite[Pages 65,66]{repSymmetic}).
    As $\theta_-(1)=|S|/\theta(1)$,
    the values of $\chi^\lambda(1)$ and $\theta_-(1)$ can be computed via the hook length formula.
    Note that $\theta_-(1)=\frac{(p-1)!}{p-3}$ when $n=p$, or $\theta_-(1)=\frac{n(p-1)!(n-p)!}{2}$ when $n>p$. 
	In either case, $\pi(\theta_-(1))$ contains a prime larger than $3$, while $p\notin \pi(\theta_-(1))$, a contradiction.
\end{proof}

Let $S=\mathrm{PSL}_2(q)$ with $q=p^{f}\geq 5$ for some prime $p$.
Then $\mathrm{Aut}(S)=S \rtimes (\langle \delta\rangle\times \langle \varphi\rangle)$
where $\delta$ is the diagonal automorphism of order $\gcd(q-1,2)$, and $\varphi$ is a field automorphism of order $f$ (see \cite[Page 2]{white13}).
In particular, $S$ is the derived subgroup of $G$ whenever $S\leq G\leq \mathrm{Aut}(S)$.
To prove Theorem \ref{thmB}, we also need the following result.

\begin{lem}\label{lem: psl}
	Let $G$ be an almost simple group with socle $S$.
	If $S=\mathrm{PSL}_2(q)$ with $q=p^{f}\geq 5$ for some prime $p$, then $|\omega_+(G|S)|>2$.
\end{lem}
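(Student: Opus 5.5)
Since $S=\mathrm{PSL}_2(q)$ is the derived subgroup of $G$ (and $S\le G\le \mathrm{Aut}(S)$ with $\mathbf{C}_G(S)=1$), every $\chi\in\mathrm{Irr}(G)$ with $S\nleq\ker(\chi)$ is exactly a character lying over some $\theta\in\mathrm{Irr}(S)^\sharp$. By Clifford theory, $\theta(1)\mid\chi(1)$ and $\chi(1)/\theta(1)$ divides $|G:S|$, which divides $\gcd(q-1,2)\cdot f$. The plan is to exhibit three characters $\chi_1,\chi_2,\chi_3\in\mathrm{Irr}(G|S)$ whose degrees have pairwise distinct sets of prime divisors.

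\textbf{Step 1: the Steinberg character.} It has degree $q$ and extends to $\mathrm{Aut}(S)$, so $G$ has a character $\chi_1$ over it with $\chi_1(1)=q$ and $\pi(\chi_1(1))=\{p\}$. (An alternative is to twist by linear characters of $G/S$; this still gives degree $q$.)

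\textbf{Step 2: characters of degree $q+1$ and $q-1$.} The principal series characters of $S$ have degree $q+1$ and the discrete series characters have degree $q-1$. For $q$ odd there are also the characters of degree $(q\pm1)/2$. I would use the characters of degree $q+1$ and $q-1$ of $S$ that are invariant under the field automorphisms, or at least control their inertia groups. Concretely, I would use known results on $\mathrm{cd}(G)$ for $\mathrm{PSL}_2(q)\le G\le\mathrm{Aut}(\mathrm{PSL}_2(q))$, namely White's description in \cite{white13}. This gives degrees of the form $(q+1)d$ and $(q-1)d'$ with $d,d'$ dividing $|G:S|$.

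The point is that any degree of a $\chi$ over a nontrivial $\theta\ne\mathrm{St}$ is divisible by a number among $q+1$, $q-1$, $(q\pm1)/2$. Each of these is coprime to $p$, while $\chi_1(1)=q$ is a power of $p$. So it suffices to find two characters in $\mathrm{Irr}(G|S)$ whose degrees are prime to $p$, or not, with different prime sets, both different from $\{p\}$.

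\textbf{Step 3: the main obstacle.} The multipliers $d$ coming from $|G:S|$, which divides $2f$, could merge the prime sets. The sets $\pi(q+1)$ and $\pi(q-1)$ intersect only in $\{2\}$ when $q$ is odd, and are disjoint when $q$ is even. However, multiplication by divisors of $2f$ may add primes. I would handle this as follows.

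1. Take a $G$-invariant character $\theta$ of $S$ of degree $q+1$, if one exists. It extends to $G$, since $G/S$ is metacyclic and its Sylow subgroups behave well; more precisely, I would use White's explicit list. Otherwise choose $\theta$ with the smallest possible orbit.
2. Compare $\pi$ of the resulting degree with $\pi$ of a degree from the $q-1$ family.
3. If these two prime sets coincide, then some prime $r\mid(q+1)$ also lies in the $q-1$ degree, which forces $r\mid 2f$.

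For that remaining situation I would use a Zsigmondy prime $r$ of $p^{2f}-1$, i.e. a primitive prime divisor. Such an $r$ divides $q+1$ and satisfies $r\equiv1\pmod{2f}$, so $r\nmid 2f$ and $r\nmid q-1$. Hence $r$ lies in the prime set of the $(q+1)$-type degree and not in that of the $(q-1)$-type degree, so the two sets are distinct. The Zsigmondy exceptions are $q=8$ and $q=p$ with $p$ a Mersenne prime ($f=1$). For these, $G/S$ has order at most $2$ (or $3$ when $q=8$), and I would check directly, for instance with $\mathsf{GAP}$ for $q\in\{5,7,8,9\}$ and by hand for $f=1$.

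Together with $\{p\}$ from Step 1, this yields three distinct members of $\omega_+(G|S)$, so $|\omega_+(G|S)|>2$.
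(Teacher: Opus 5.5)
Your argument is correct, and it follows a different route from the paper's. The paper relies entirely on White's Theorem A. In the generic case it reads off $q$, $2^{a}(q-1)$, $q+1\in\mathrm{cd}_+(G|S)$. Otherwise it goes through White's list of exceptional configurations ($p\in\{2,3,5\}$, $f$ odd or $f\equiv 2 \pmod 4$, specific $G$) one at a time, using the explicit degrees given there.

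Your proof needs none of this classification. The set $\{p\}$ comes from an extension of the Steinberg character. You can see this extension elementarily: every $G$ with $S\le G\le \mathrm{P\Gamma L}_2(q)$ acts $2$-transitively on the projective line, so its permutation character minus $1_G$ is irreducible of degree $q$. After that, the only inputs are $\theta(1)\mid\chi(1)$, $\chi(1)/\theta(1)\mid |G:S|\mid 2f$, and a primitive prime divisor $r$ of $p^{2f}-1$. This $r$ divides every degree lying over a character of degree $q+1$ and no degree lying over a character of degree $q-1$, whatever the inertia groups are. The argument is therefore uniform in $G$ and independent of White's exception list.

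The price is invoking Zsigmondy's theorem and checking a few cases separately: $f=1$ by hand, $q=8$ directly, and $q=5$. For $q=5$ the reason is not Zsigmondy but that $\mathrm{PSL}_2(5)$ has no character of degree $q+1$. Alternatively, use the characters of degree $(q+1)/2$, which $r$ still divides because $r$ is odd.

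You should drop the detour in your Step 3(1). It is unnecessary, and its justification is wrong. An invariant $\theta$ of degree $q+1$ need not extend merely because $G/S$ is metacyclic. For $G=\mathrm{Aut}(\mathsf{A}_6)$, the invariant degree-$10$ character of $\mathsf{A}_6$ lies under a single irreducible character of $G$, of degree $20$. Your Zsigmondy step never uses extendibility, so the proof is unaffected once this remark is removed.
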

\begin{proof}
	Set $\mathrm{Aut}(S)=S \rtimes (\langle \delta\rangle \times \langle \varphi\rangle)$, where $\delta$ is the diagonal automorphism of order $\gcd(q-1,2)$, and $\varphi$ is a field automorphism of order $f$.
	For odd primes $p$, we set $\epsilon=(-1)^{\frac{q-1}{2}}$.
	Since $G'=S$, we have $\mathrm{cd}_+(G|S)=\mathrm{cd}_+(G)-\{ 1 \}$.
	Write $|G:S|=d=2^{a}m$ with $m$ odd. 
	If $q,2^{a}(q-1), q+1\in \mathrm{cd}_+(G|S)$, then a routine verification shows that $\pi(q)$, $\pi(2^{a}(q-1))$ and $\pi(q+1)$ are pairwise distinct.
	By \cite[Theorem A]{white13}, it follows that $d>1$, and we are in the exceptional cases listed in the same theorem.
	Let $r$ be the smallest prime divisor of $d$.
	We now analyze these cases individually.

   If $p=3$, $2\nmid f$ and $G=S \langle \varphi\rangle$, 
   then \cite[Theorem A]{white13} implies that $q,\frac{q+\epsilon}{2}, r(q-\epsilon)\in \mathrm{cd}_+(G|S)$.
   Noting that $q$ and $\frac{q+\epsilon}{2}$ are odd but $r(q-\epsilon)$ is even,
   we conclude that $\pi(q), \pi(\frac{q+\epsilon}{2}), \pi(r(q-\epsilon))\in \omega_+(G|S)$ are pairwise distinct.

   If $p=3$, $2\nmid f$ and $G=\mathrm{Aut}(S)$, then $q,q-1,r(q+1)\in \mathrm{cd}_+(G|S)$ by \cite[Theorem A]{white13}.
Observe that $q-1=2k$ for some odd integer $k>1$, and $q+1=4l$ for some odd integer $l>1$.
 We thus deduce that $\pi(q), \pi(q-1), \pi(r(q+1))\in \omega_+(G|S)$ are pairwise distinct.

  If $p\in \{ 2,5 \}$, $2\nmid f$ and $G=S \langle \varphi\rangle$, then $q,q-1,r(q+1)\in \mathrm{cd}_+(G|S)$ by \cite[Theorem A]{white13}.
  For $p=2$, a routine check confirms that $\pi(q)$, $\pi(q-1)$ and $\pi(r(q+1))$ are pairwise distinct.
  For $p=5$,
  since $f\geq d>1$, we have $q=5^f\geq 5^3$.
  It follows that $q-1=4k$ for some odd integer $k>1$, and $q+1=2l$ for some odd integer $l>1$.
  Consequently, a routine verification shows that $\pi(q), \pi(q-1), \pi(r(q+1))\in \omega_+(G|S)$ are pairwise distinct.

  If $p=2$, $f\equiv 2~(\mathrm{mod}~4)$ and $G=S \langle \varphi\rangle$,
  then $q,r(q-1),q+1\in \mathrm{cd}_+(G|S)$ by \cite[Theorem A]{white13}.
  It is routine to check that $\pi(q)$, $\pi(r(q-1))$ and $\pi(q+1)$ are pairwise distinct.

 If $p=3$, $f\equiv 2~(\mathrm{mod}~4)$ and $G=S \langle \varphi\rangle$,
 then $q,\frac{q+\epsilon}{2},2^{a}(q-1)\in \mathrm{cd}_+(G|S)$ when $\epsilon=1$, and $q,\frac{q+\epsilon}{2},q+1\in \mathrm{cd}_+(G|S)$ when $\epsilon=-1$.
 In either case, we have $|\omega_+(G|S)|>2$.

 If $p=3$, $f\equiv 2~(\mathrm{mod}~4)$ and $G=S \langle \delta\varphi\rangle$,
 then $q,q+\epsilon,2^{a}(q-1)\in \mathrm{cd}_+(G|S)$ when $\epsilon=1$, and $q,q+\epsilon,q+1\in \mathrm{cd}_+(G|S)$ when $\epsilon=-1$.
  In either case, we have $|\omega_+(G|S)|>2$.
\end{proof}

Now, we are ready to prove Theorem \ref{thmB}.

\begin{proof}[Proof of Theorem \ref{thmB}]
	We proceed by induction on $|G|+|N|$.
	If $N$ is abelian, then we are done.
	Thus, without loss of generality, we may assume that $N$ is nonabelian.
	In particular, $|\omega_{+}(G|N)|\geq 1$.
	For every $G$-invariant subgroup $D$ of $N$, 
	as $\mathrm{Irr}(G|D)\subseteq \mathrm{Irr}(G|N)$ and $\mathrm{Irr}(G/D|N/D)\subseteq \mathrm{Irr}(G|N)$,
	we may assume, by induction, that 
	$N$ is a nonabelian minimal normal subgroup in $G$.
    Let $E$ be a maximal normal subgroup of $G$ such that $E\cap N=1$.
	Since $\mathrm{Irr}(G/E|NE/E)\subseteq \mathrm{Irr}(G|N)$, we may further assume by induction that 
	$N$ is the unique minimal normal subgroup of $G$. 
 Applying \cite[Theorem D]{isaacs98}, we have
    $\omega_+(G|N)=\{ \pi_1,\pi_2\}$ where $\pi_i$ are nonempty such that
    $\pi_1\cap \pi_2=\varnothing$.
    In particular, the character degree graph $\Delta_+(N)$ is disconnected by Clifford's theorem.
    Since the character degree graph $\Delta_+(N)$ is disconnected,
	$N$ must be simple,
    Theorem \ref{thm: simp gp disc}(1) then implies that $N\cong \mathrm{PSL}_2(q)$ with $q=p^{f}\geq 5$ for some prime $p$.
	So, 
	$G$ is an almost simple group with socle $N$ which contradicts 
    Lemma \ref{lem: psl}.
\end{proof}

Finally, we are ready to prove Theorem \ref{thmA}.

\begin{proof}[Proof of Theorem \ref{thmA}]
	Let $G$ be a counterexample of minimal order.
    For any normal subgroup $N\unlhd G$,
	as $\omega_\epsilon(G/N)\subseteq \omega_\epsilon(G)$,
	the minimality of $G$ yields that 
	$G$ has a unique minimal normal subgroup $N$, where $N$ is nonabelian and $G/N$ is solvable.
    So, $\varnothing \notin \omega_\epsilon(G|N)$.
	This implies $|\omega_{\epsilon}(G|N)|\leq 2$.
	 If $\epsilon=+$, then we derive a contradiction by Theorem \ref{thmB}.

    We now consider the case where $\epsilon=-$.
    As $G/N$ is solvable such that $|\omega_-(G/N)|\leq |\omega_-(G)|\leq  3$,
	Proposition \ref{prop: sol - <=3} yields that
	either $G/N$ is a $p$-group for some prime $p$, 
	or 
	$\omega_{-}(G/N)=\{ \varnothing, \{ p \}, \pi \}$ where $\pi\in \{ \{ q \}, \{ p,q \} \}$.
    If the latter holds, as $\omega_-(G)=\omega_-(G/N)=\{ \varnothing, \{ p \}, \pi \}$ where $\pi\in \{ \{ q \}, \{ p,q \} \}$,
   it forces by Lemma \ref{lem: cod pi(g)} that $\pi(G)= \{ p,q \}$ whence $G$ is solvable by Burnside's $p^aq^{b}$-theorem, a contradiction.
  We may therefore assume that $G/N$ is a $p$-group for some prime $p$.
 Note that Lemma \ref{lem: simp -} implies $G>N$,
  and so $\omega_-(G)=\{ \varnothing, \{ p \}, \pi \}$ with $\{ p \}\cup \pi=\pi(G)$.
Let $q>3$ be a prime divisor of $|N|$. 

Suppose that $q \neq p$. 
By \cite[Corollary 2]{granville96}, 
there exists some $\theta\in \mathrm{Irr}(N)$ having $q$-defect zero in $N$.
Let $\chi\in \mathrm{Irr}(G|\theta)$, and observe that $q$ does not divide $\chi_-(1)$.
Since $\pi(\chi_-(1)) \neq \{p\}$ by \cite[Theorem B]{riese98}, we have that $q\notin\pi(\chi_-(1)) = \pi$, a contradiction.

So, we may assume that the unique prime larger than 3 that divides $|N|$ is $p$. 
Furthermore, $|\pi(N)|=3$ and $2,3 \in \pi\cap \pi(N)$. 
However, the simple groups of order divisible by exactly 3 primes (see \cite[Page 12]{gorenstein82}) have an irreducible character $\alpha$ with $\pi(\alpha_-(1))=\{3,p\}$.
In fact, this can be verified by checking these groups via $\mathsf{GAP}$ \cite{gap},
So, $N$ also has an irreducible character $\theta$ with $\pi(\theta_-(1))=\{3,p\}$.
 Hence, $\{3,p\} \in \omega_-(G)$. 
 This is the final contradiction.
\end{proof}

\begin{acknowledgement}
	 The author would like to thank Professor Guohua Qian for bringing this problem to her attention.
	The author is grateful to the referee for her/his valuable comments.
\end{acknowledgement}

\end{document}